\documentclass[twoside,a4paper,reqno,12pt]{amsart}

\usepackage{cite}
\usepackage{mathrsfs}
\usepackage{amsmath}
\usepackage{amsthm}
\usepackage{latexsym}
\usepackage{amssymb,bm}
\usepackage{amsfonts}
\usepackage{longtable}
\usepackage{multirow}
\usepackage[colorlinks,linkcolor=cyan,anchorcolor=blue,citecolor=red,backref=page]{hyperref}
\usepackage{braket}
\usepackage{amsmath,enumerate}
\usepackage[top=28mm,right=25mm,bottom=28mm,left=25mm]{geometry}

\usepackage{graphics}

\def\l{\langle}
\def\r{\rangle}

\usepackage{color}

\def\Aut{{\sf Aut}} \def\Cos{{\sf Cos}}
\def\A{{\rm A}} \def\S{\rm S}
\def\PSL{{\rm PSL}}

\def\a{\alpha}
\def\b{\beta}

\def\N{{\bf N}}

\newtheorem{thm}{Theorem}[section]

\newtheorem{theorem}[thm]{Theorem}
\newtheorem{lemma}[thm]{Lemma}
\newtheorem{coro}[thm]{Corollary}

\newtheorem{problem}[thm]{Problem}

\theoremstyle{definition}

\newtheorem*{remark*}{Remarks}
\newtheorem{example}[thm]{Example}

\newtheorem{construction}[thm]{Construction}

\def\qed{\nopagebreak\hfill{\rule{4pt}{7pt}}
	\medbreak}

\def\K{{\bf K}}

\def\bbZ{{\mathbb{Z}}}
\def\S{{\mathrm{S}}}
\def\A{{\mathrm{A}}}
\def\K{{\mathrm{K}}}
\def\Sym{{\mathrm{Sym}}}

\def\PSU{{\mathrm{PSU}}}
\def\Cos{{\mathrm{Cos}}}

\def\leq{\leqslant}

\def\geq{\geqslant}

\def\calA{{\mathcal A}}
\def\calB{{\mathcal{B}}}

\begin{document}

	\title[Non-solvable covers of the complete graphs]{A construction of $2$-arc-transitive non-solvable covers of complete graphs}

	\author{Jiyong Chen}
	\author{Cai Heng Li}
	\author{Ci Xuan Wu}
    \author{Yan Zhou Zhu}
	
	\address{J. Chen, School of Mathematical Sciences\\
		Xiamen University \\
		Xiamen 361005\\
		P. R. China}
	\email{chenjy1988@xmu.edu.cn}
	
	\address{C.H. Li, SUSTech International Center for mathematics\\
		Department of Mathematics\\
		Southern University of Science and Technology \\
		Shenzhen 518055\\
		P. R. China}
	\email{lich@sustech.edu.cn}
	
	\address{C.X. Wu, School of Statistics and Mathematics \\
		Yunnan University of Finance and Economics\\
		Kunming 650021\\
		P. R. China}
	\email{wucixuan@gmail.com}
	
	\address{Y.Z. Zhu, School of Mathematical Sciences\\
		Xiamen University \\
		Xiamen 361005\\
		P. R. China}
	\email{zhuyz@xmu.edu.cn}
	
	\date\today

\begin{abstract}
We construct connected $2$-arc-transitive covers of complete graphs with non-abelian characteristically simple transformation groups. This solves the existence problem for non-solvable $2$-arc-transitive covers of complete graphs.

	\bigskip
	\noindent{\bf Key words:} complete graph, non-solvable cover\\
	\noindent{\bf 2000 Mathematics subject classification:} 05C38, 20B25	
\end{abstract}
\maketitle

\section{Introduction}

Throughout this paper, a group is assumed to be finite, and a graph is assumed to be finite, simple and undirected.
Given a graph $\Gamma=(V,E)$, an \textit{$s$-arc} of $\Gamma$ is a sequence of $s+1$ vertices, where any two consecutive vertices are adjacent and any three consecutive vertices are distinct.
Let $\Gamma$ be a connected regular graph.
For a group $G$ of automorphisms of $\Gamma$, the graph $\Gamma$ is called \textit{$(G,s)$-arc-transitive} (or simply \textit{$s$-arc-transitive}) if $G$ acts transitively on the set of all $s$-arcs of $\Gamma$.
Finite $s$-arc-transitive graphs form an important class of graphs and have received considerable attention.
In particular, Weiss~\cite{Weiss} proved, using local action analysis, that there exists no $8$-arc-transitive graph except for cycles.

For a $(G,s)$-arc-transitive graph $\Gamma=(V,E)$, the orbits $\calB$ of a normal subgroup $N$ on $V$ form a $G$-invariant partition of $V$.
The \textit{normal quotient} $\Gamma_N$ of $\Gamma$ is defined as the graph with vertex set $\calB$, where two vertices $B_1$ and $B_2$ are adjacent if and only if there exist $\beta_1\in B_1$ and $\beta_2\in B_2$ that are adjacent in $\Gamma$.
It was shown by Praeger~\cite[Theorem 4.1]{Praeger1} that if $\Gamma$ is a connected $(G,s)$-arc-transitive graph with $s\geqslant 2$ and $N$ has at least $3$ orbits on $V$, then $\Gamma_N$ is $(G/N,s)$-arc-transitive, and $\Gamma$ and $\Gamma_N$ have the same valency.
In this case, $\Gamma$ is called a \textit{cover} of $\Gamma_N$.
To emphasize the normal subgroup $N$, the graph $\Gamma$ is called an \textit{$N$-cover} of $\Gamma_N$, and $N$ is called the \textit{transformation group} of this cover.
In other words, the class of $s$-arc-transitive graphs is closed under taking normal quotients.
The minimal objects in the class are called \textit{basic} $s$-arc-transitive graphs, which are $(G,s)$-arc-transitive graphs $(V,E)$ with no non-trivial normal quotient, so that either $G$ is \textit{quasiprimitive on $V$} (each minimal normal subgroup is transitive on $V$), or $G$ is \textit{bi-quasiprimitive on $V$} (each minimal normal subgroup of $G$ has exactly two orbits on $V$).
Thus studying $s$-arc-transitive graphs can be divided into two steps as follows:
\begin{enumerate}
	\item[(A)] Determine basic $s$-arc-transitive graphs.
	
	\item[(B)] Construct and characterize normal covers of the basic graphs in Step (A).
\end{enumerate}

Praeger gave systematic descriptions of basic $2$-arc-transitive graphs, see~\cite{Praeger1} for non-bipartite graphs and~\cite{Praeger_bipart} for bipartite graphs.
This initiated a program for the study of $s$-arc-transitive graphs, and the program has been successfully applied to characterize some important classes of $s$-arc-transitive graphs, see~\cite{Giudici-King,Xia-Cay,Lu1,ZhouJX} and references therein.

For Step~(B) of the program, a remarkable result due to J.H.~Conway shows that each connected $s$-arc-transitive graph has a non-trivial connected $s$-arc-transitive cover with an elementary abelian $2$-group as its transformation group, see~\cite[Chapter 19]{Biggs-book}.
However, it is generally hard to construct all $s$-arc-transitive covers of given $s$-arc-transitive graphs.
This paper focuses on the `most special' class of graphs.

The automorphism group of the complete graph $\K_n$ is $\S_n$, and $\K_n$ is $(\S_n,2)$-arc-transitive when $n\geqslant 3$.
Concerning the program mentioned above, a natural problem arises:

\begin{problem}
	Construct and characterize $2$-arc-transitive covers of complete graphs.
\end{problem}

There have been some results on the problem, see~\cite{du1998arctransitive, du20052arctransitive,XDKX}, for which the transformation groups are metacyclic or abelian of rank at most $3$.
This paper presents a construction of $2$-arc-transitive covers of the complete graphs with transformation group being non-abelian characteristically simple groups.

\begin{theorem}\label{thm:cover-complete}
    Let $\Sigma=\K_n$ be the complete graph of order $n\geq 4$.
    Then, for any finite non-abelian simple group $T$, there exist connected $(X,2)$-arc-transitive covers of $\Sigma$, where $X=T^d{.}\S_n$ for some integer $d$, and $T^d$ is minimal normal in $X$.
\end{theorem}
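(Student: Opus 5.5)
We realise the cover as a coset graph. Let $S=\S_n$ act on $\K_n$, fix a vertex stabiliser $S_1=\S_{n-1}$, and pick an involution $\tau=(1\,2)\in S$ swapping the arc $(1,2)$. Then $\K_n\cong\Cos(S,S_1,S_1\tau S_1)$ is $(S,2)$-arc-transitive because $S_1$ is $2$-transitive on the $n-1$ neighbours. To build a cover, we look for a group $X$ with a normal subgroup $N=T^d$ and $X/N\cong S$. We also need subgroups $H\cong S_1$ and an involution $g\in X$ satisfying three conditions:
\begin{enumerate}
\item[(i)] $H\cap N=1$, and the images of $H$ and $g$ in $S$ are $S_1$ and $\tau$;
\item[(ii)] $g$ normalises $H\cap H^g$, which then maps onto the arc stabiliser $\S_{n-2}$;
\item[(iii)] $\langle H,g\rangle=X$.
\end{enumerate}
Then $\Gamma=\Cos(X,H,HgH)$ is connected by (iii). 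It is $(X,2)$-arc-transitive because $H\cong\S_{n-1}$ acts on $H/(H\cap H^g)$ as on $n-1$ points. Moreover $N\cap H=1$ and $N$ has $n\ge 3$ orbits, so by the result of Praeger quoted in the introduction, $\Gamma$ is an $N$-cover of $\Gamma_N=\K_n$.

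The construction is a wreath-product/subdirect trick. Choose a finite group $R=\langle a,b\rangle$ generated by two elements, with $R$ having $T$ as a quotient. The freest choice is to let the generators of $\S_{n-1}$ and the involution map "generically". Concretely, consider a free-type presentation: let $F$ be the group generated by $\S_{n-1}$ and an involution $t$ subject only to the relations making $\S_{n-2}=\S_{n-1}\cap \S_{n-1}^t$ commute with $t$. This is the amalgam $\S_{n-1}*_{\S_{n-2}}(\S_{n-2}\times\langle t\rangle)$, and it maps onto $\S_n$. Let $K$ be the kernel. The amalgam is virtually free, so $K$ is a free group of finite rank $r\geq 2$; its rank is positive because $n\ge4$ makes the Euler characteristic negative. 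Consider all epimorphisms $K\to T$ and let $M$ be the intersection of the kernels of those epimorphisms that are in the $F$-orbit of one fixed epimorphism. Then $M$ is normal in $F$, and $K/M\cong T^d$ is a subdirect product of copies of $T$, hence a direct power. The standard argument is that a subdirect product of nonabelian simple groups is a direct product of some of them. Set $X=F/M=T^d.\S_n$, let $H$ be the image of $\S_{n-1}$ and $g$ the image of $t$. Conditions (i)–(iii) then hold automatically: $\S_{n-1}\cap K=1$ in the amalgam since finite vertex groups inject, so the image of $H$ also meets $N$ trivially, and generation is built in.

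The remaining point, and the main obstacle, is the minimality of $T^d$ in $X$. This holds exactly when $F$, acting by conjugation, permutes the $d$ factors transitively. Since we took a single $F$-orbit of epimorphisms, $F$ permutes the kernels of $K\to T^d\to T$ transitively, and $\Aut$-orbits are absorbed because we use kernels. Hence the simple direct factors, which correspond to these kernels, form one orbit. Any nontrivial normal subgroup of $X$ inside $T^d$ is a product of factors, and the factors form a single orbit, so $T^d$ is minimal normal. We still need to check that $K$ (free of rank $\ge2$) surjects onto every nonabelian simple $T$, which is true since every finite simple group is $2$-generated. We also need $d$ finite, which holds because $F$ acts on the finite set $\mathrm{Epi}(K,T)$ with finitely generated $K$. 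Finally, $2$-arc-transitivity is preserved by the quotient, since $H\cap H^g$ maps isomorphically and $H$ stays $\S_{n-1}$.
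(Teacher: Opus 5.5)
Your argument is correct, and it takes a genuinely different route from the paper.

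\textbf{Your route.} You pass to the universal group $F=\S_{n-1}*_{\S_{n-2}}(\S_{n-2}\times\bbZ_2)$ of the $(n-1)$-valent tree. Its kernel $K$ onto $\S_n$ is $\pi_1(\K_n)$, which is free of rank $\binom{n-1}{2}\geq 3$. The clean justification for freeness is that $K$ meets every conjugate of the vertex groups trivially, so it acts freely on the Bass--Serre tree; ``virtually free'' alone does not give it. You then take the core $M$ of $\ker\phi$ for an arbitrary epimorphism $\phi\colon K\to T$. This gives $K/M\cong T^d$ with $d=|F:\N_F(\ker\phi)|$. Minimality follows from transitivity on the conjugates of $\ker\phi$, and the remaining properties (connectivity, $2$-arc-transitivity, being a cover of $\K_n$) follow from $\S_{n-1}\cap K=1$.

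\textbf{The paper's route.} The paper works concretely in $T\wr_{\calA}\S_n$. It writes down $g=f\delta$ from a generating pair $(x,y)$ with $|x|=2$ and $|y|$ an odd prime. It checks $g^2=1$ and $[L,g]=1$ by hand. It then proves that the intersection with the base group maps onto a coordinate, using the explicit element $s=(g(23))^3$. The paper's $Y$ is in fact a quotient of your amalgam, corresponding to one specific choice of $\phi$.

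\textbf{What each buys.} Your route is more economical and more general. It needs only that $T$ be $\binom{n-1}{2}$-generated, not the special involution/odd-prime generation result. It applies essentially verbatim to any finite arc-transitive graph of valency at least $3$, with no coordinate computations. The paper's route yields explicit, computable groups and real information about $d$:
\begin{enumerate}[\rm(a)]
\item $d\mid(n-1)!$, via an $\S_n$-invariant partition of the $n$-cycles;
\item $d\geq\frac12\binom{n}{\lfloor n/2\rfloor}$ for $n\geq 7$;
\item the exact trichotomy $d\in\{1,3,6\}$ for $n=4$.
\end{enumerate}
Your method only tells you that $d$ is some transitive permutation degree of $\S_n$. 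Finally, the sentence about $R=\langle a,b\rangle$ in your write-up is never used and can be deleted.
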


\begin{remark*}
    \
    \begin{enumerate}[\rm(1)]
        \item
        If $G$ is isomorphic to a transitive subgroup of $\S_k$, then we say $G$ has a \textit{transitive permutation degree} $k$.
        In other words, a transitive permutation degree of $G$ is equal to $|G:H|$ for some subgroup $H$ of $G$.
        The value of $d$ in the group $X=T^d.\S_n$ is equal to a transitive permutation degree of the group $\S_n$.
        By Lemma~\ref{lem:simplecomplete},  $d\mid(n-1)!$ and  
        \[\frac{1}{2} \binom{n}{\lfloor \frac{n}{2}\rfloor}\leqslant d\leqslant n!,\ \mbox{for $n\geqslant 7$}.\]
        In particular, $d\geqslant 18>1$ when $n\geqslant 7$, and $d$ can be $1$ in the case $n=4$ (see Theorem~\ref{thm:k4}).
        
        \item For any $3$-transitive subgroup $G$ of $\S_n$, a $(T^d.\S_n,2)$-arc-transitive cover of $\K_n$ is also a $(T^d.G,2)$-arc-transitive graph;
        however, $T^d$ may not be a minimal normal subgroup of $T^d.G$.

        \item In ~\cite{Petersen_Cover}, a construction is given of $2$-arc-transitive covers of the Petersen graph with non-solvable transformation groups, which (to the best of our knowledge) were the only known explicit examples of $2$-arc-transitive covers with non-solvable transformation groups before those in Theorem~\ref{thm:cover-complete} were discovered.
    \end{enumerate}
\end{remark*}

Determining the exact value of $d$ appearing in the main theorem is a difficult problem. 
To illustrate the difficulty, we explore the case $n=4$ in Section~\ref{sec:n4}, which leads to the following corollary.  

\begin{coro}\label{coro:k4}
    Let $Y$ and $\Gamma$ be the group and the graph defined in Construction~\ref{con:K4}.
    Then $\Gamma$ is a connected $(Y,2)$-arc-transitive graph with $Y\cong T^d.\S_4$ and $\Gamma$ is a $T^d$-cover of $\K_4$, where $d\in\{1,3,6\}$.
\end{coro}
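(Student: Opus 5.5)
The statement concerns a specific construction (Construction~\ref{con:K4}) that is not visible yet. I will therefore plan around the natural shape such a construction must have: a group $Y$ generated by a subgroup $H \cong \S_3$, playing the role of a vertex stabiliser acting $2$-transitively on the $3$ neighbours, together with an involution $g$ swapping an arc. The graph is $\Gamma=\Cos(Y,H,HgH)$. Presumably $Y$ is built inside $T\wr\S_4$ or $\S_4\times\Aut(T)$-type data for some simple group $T$ having suitable $\S_3$ and involution elements. The proof then splits into three parts: the coset-graph properties, the identification of the normal structure $T^d.\S_4$, and the bound on $d$.

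\textbf{Coset-graph properties.} First I verify the standard criteria for coset graphs:
\begin{itemize}
\item $g^2=1$ with $g\notin N_Y(H)$, and $g$ normalises a subgroup $H\cap H^g\cong \mathbb{Z}_2$;
\item $|H:H\cap H^g|=3$, so $\Gamma$ is a simple graph of valency $3$;
\item $H$ acts on $H/(H\cap H^g)$ as $\S_3$, i.e.\ $2$-transitively, which gives $(Y,2)$-arc-transitivity;
\item $\langle H,g\rangle=Y$, which gives connectivity.
\end{itemize}

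\textbf{Normal structure and the cover.} Next I exhibit the projection $\pi\colon Y\to\S_4$, induced by the construction, which maps $H$ to a point stabiliser $\S_3$ and $g$ to a transposition-type element. Then $\pi(\langle H,g\rangle)=\S_4$, and $\ker\pi=N$ has $4$ orbits on vertices. By Praeger's theorem quoted in the introduction, $\Gamma$ is an $N$-cover of $\Gamma_N\cong\K_4$. It remains to show $N\cong T^d$. Since $Y$ embeds in some $T^m{:}\S_4$ (or $T^4$ with coordinate permutation), $N$ is a subgroup of $T^m$ normalised by the image, and it is a subdirect-type subgroup. Its projection to each coordinate is onto $T$, because $H$ and $g$ were chosen so that their coordinate components generate $T$. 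By the standard lemma on subdirect products of non-abelian simple groups, $N$ is a product of full diagonals, hence $N\cong T^d$ with $d\mid m$.

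\textbf{Bounding $d$.} The diagonal blocks are permuted transitively by $\S_4$, so $d$ is a transitive degree of $\S_4$ dividing $(4-1)!=6$ (cf.\ Lemma~\ref{lem:simplecomplete}). This gives $d\in\{1,2,3,6\}$, and I would exclude $d=2$ as follows. The block action would factor through $\S_4\to\S_2$ with kernel $\A_4$. Then $H\cong\S_3$ would contain an element swapping the two blocks, and I expect this to contradict how $H$ acts on the coordinates in the construction. One concrete route: the $3$-element of $H$ fixes both blocks while the involutions swap them, which forces a diagonal identification incompatible with the chosen generators.

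I expect the main obstacle to be showing that the coordinate projections of $N$ are all of $T$, i.e.\ that the chosen elements generate $T$ coordinatewise. I would handle this by choosing $T$-components forming a generating pair, for example via $(2,3)$-generation or known generating pairs, and checking that the commutator subgroup of $Y$ captures them.
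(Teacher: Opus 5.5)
\textbf{Gap: the exclusion of $d=2$.} Construction~\ref{con:K4} is just Construction~\ref{cons:simplecomplete} with $n=4$. So your first two parts (connectivity, $2$-arc-transitivity, the $\K_4$ quotient, and $N\cong T^d$ with $d\mid 6$) are all supplied by Lemma~\ref{lem:simplecomplete}, which you cite. The only new content of the corollary is ruling out $d=2$, and your proposal does not do this.

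The route you suggest cannot work uniformly. You propose that a two-block structure would force a diagonal identification incompatible with the chosen generators. But $(T,x,y)$ is arbitrary. A condition of the form ``some $\varphi\in\Aut(T)$ carries one coordinate of $N$ to another'' holds or fails depending on $(T,x,y)$; this is exactly how $\Phi_1$ and $\Phi_2$ separate $d=3$ from $d=1$ in Theorem~\ref{thm:k4}. So no argument about $x,y$ can exclude $d=2$ for every $T$.

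The missing idea is purely permutation-theoretic. The six coordinates of $T^6$ are the six $4$-cycles, permuted by $\S_4$ via conjugation. The stabiliser of a coordinate is therefore the centraliser $\langle(1234)\rangle\cong\bbZ_4$. By Lemma~\ref{lem:subdirect}, the diagonal blocks form an $\S_4$-invariant partition of these six points. Such partitions correspond to overgroups of $\bbZ_4$ in $\S_4$, and these are only $\bbZ_4$, $D_8$ and $\S_4$. There is no overgroup of index $2$, because the unique index-$2$ subgroup $\A_4$ contains no $4$-cycle.

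In the terms of your own setup: the kernel $\A_4$ of a two-block action would have to fix both blocks. Yet $\A_4$ is transitive on the six $4$-cycles, since the stabiliser of $(1234)$ in $\A_4$ has order $2$. Concretely, in the labelling of Construction~\ref{con:K4}, the blocks would have to be the $\langle h_1\rangle$-orbits $\{1,4,6\}$ and $\{2,3,5\}$. But $\delta=(14)(23)(56)$ sends $\{1,4,6\}$ to $\{1,4,5\}$, which is neither block.

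Hence the only invariant partitions have $6$ blocks, $1$ block, or $3$ blocks, namely the pairs $\{\alpha,\alpha^{-1}\}$, i.e.\ $\{\{1,2\},\{3,4\},\{5,6\}\}$. This gives $d\in\{1,3,6\}$, and it is the fact the paper uses in Step~2 of the proof of Theorem~\ref{thm:k4}.

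\textbf{A smaller point.} You justify surjectivity of the projections of $N$ by saying $H$ and $g$ have components generating $T$. That is not enough: $H$ has trivial $T$-part, and the components of $g$ generating $T$ do not by themselves control $Y\cap T^6$. You need explicit elements of $N$. For example, $t_2t_3$ and $t_3$ from Lemma~\ref{lem:k4} have first coordinates $y^4$ and $xy^2$, which generate $T$. Alternatively, use $s=(g(23))^3$ as in Lemma~\ref{lem:simplecomplete}.
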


\section{The construction}\label{sec:completecover}

First, we introduce the coset graph representation for arc-transitive graphs.
Let $\Gamma=(V,E)$ be a $G$-arc-transitive graph.
Fix an arc $(\a,\b)$, and an element $g\in G$ which interchanges $\a$ and $\b$, so that $g\in\N_G(G_{\a\b})$.
Since $G$ is transitive on the vertices, $V$ can be identified with the set of right cosets $[G:G_\a]=\{G_\a x\mid x\in G\}$ such that the action of $G$ on $V$ is equivalent to the coset action, for any $y\in G$,
\[y:\ G_\a x\mapsto G_\a xy,\ \mbox{where $x\in G$}.\]
Identify $G_\a$ with $\a$ and $G_\a g$ with $\b$, so that $\Gamma(\a)$, the set of neighbors of $\alpha$ in $\Gamma$, is $\{G_\a gh\mid h\in G_\a\}$.
It follows that the adjacency relation is determined by
\[G_\a x\sim G_\a y\Longleftrightarrow yx^{-1}\in G_\a gG_\a.\]
Then, in this way, $\Gamma$ is denoted by $\Cos(G,G_\a, G_\a gG_\a)$, called a coset graph.

Conversely, arc-transitive graphs can be constructed as coset graphs from abstract groups.
Let $G$ be a group, $H$ a subgroup of $G$ and $g\in G$.
Define a \textit{coset graph} $\Gamma=\Cos(G,H,HgH)$, which has vertex set $[G:H]$ such that $(Hx, Hy)$ is an arc if and only if $yx^{-1}\in HgH$.
Then $\Gamma$ is $G$-arc-transitive and the following statements hold.
\begin{enumerate}[\rm(1)]
    \item $\Gamma$ is connected if and only if $\braket{H,g}=G$;
    \item $\Gamma$ is undirected if and only if $HgH=Hg^{-1}H $;
    \item $\Gamma$ is $(G,2)$-arc-transitive if and only if $H$ is $2$-transitive on $[H:H\cap H^g]$.
\end{enumerate} 

Note that $\Aut(\K_n)\cong \S_n$ is $2$-arc-transitive on the complete graph $\K_n$, and a subgroup $G\leqslant \Aut(\K_n)$ is $2$-arc-transitive if and only if $G$ is $3$-transitive on the vertices.
Thus, we can express the complete graphs as below.

\begin{example}\label{ex:complete-graph}
Let $\Gamma=(V,E)=\K_n$ for a positive integer $n\geqslant 3$.
Given a $3$-transitive subgroup $G\leqslant\Sym(V)\cong \S_n$ and two points $\a,\b\in V$, the stabilizer $G_\a$ is $2$-transitive on $[G_\a:G_{\a\b}]$.
Let $g\in G$ be such that $(\a,\b)^g=(\b,\a)$.
Then $\Gamma=\Cos(G,G_\a,G_\a gG_\a)$.
\qed
\end{example}

Now, we are ready to construct non-solvable $2$-arc-transitive normal covers for $2$-arc-transitive complete graphs, and prove Theorem~\ref{thm:cover-complete}.

Let $\Omega=\{1,2,\dots,n\}$ with $n\geqslant 4$, and let $G=\Sym(\Omega)=\S_n$.
Consider the $n$-cycle $(12\dots n)\in G$ and let
\[\mbox{$\calA=(12\dots n)^G=\{(12\dots n)^g\mid g\in G\}$}\]
be the set of all $n$-cycles in $G$.
Note that any element of $\calA$ can be uniquely written as $(1i_2i_3\dots i_n)$. Hence, the set $\calA$ can be partitioned into $n-1$ disjoint subsets $O_1,\dots, O_{n-1}$ such that $O_k$ consists of elements with $i_{k+1}=2$, in other words,
\[O_k=\{(1i_2i_3\dots i_n) \in \calA \mid i_{k+1}=2\} =\{\alpha\in \mathcal{A}\mid 1^{\alpha^k}=2\}.\]

Let $T$ be a finite non-abelian simple group. 
By~\cite{2p-generated}, there exist $x,y\in T$ with $|x|=2$ and $|y|$ an odd prime such that $T=\langle x,y\rangle$.
We use the definition of \textit{wreath product} given in~\cite[page 46]{snmaximal}, and we explain some necessary notation in the following context.
Let $F=\mathrm{Fun}(\calA,T)$ be the set consisting of all functions from $\mathcal{A}$ to $T$.
Note that $F$ is a group by defining the multiplication as follows:
\[(f_1f_2)(\alpha)=f_1(\alpha)f_2(\alpha),\mbox{ for all $f_1,f_2\in F$ and $\alpha\in \calA$}.\]
Then $F$ is isomorphic to $T^{|\calA|}$ via the isomorphism $f\mapsto (f(\alpha_1),...,f(\alpha_{|\calA|}))$, where $\calA=\{\alpha_1,...,\alpha_{|\calA|}\}$.
The action of $G$ on $F$ is defined by
\[(f^g)(\alpha)=f(\alpha^{g^{-1}}),\mbox{ for all $f\in F$, $\alpha\in \calA$ and $g\in G$.}\]
Then we have the semidirect product $X=F{:}G$ defined by this action.
In~\cite[page 46]{snmaximal}, this semidirect product is used to define \textit{wreath product} of $T$ by $G$, denoted by $X=T\wr_{\calA}G$.
Hence, we have that 
\[X=F{:}G\cong T\wr_\mathcal{A}G\cong T^{|\mathcal{A}|}{:}\S_n=T^{(n-1)!}{:}\S_n.\]

Let $L=\Sym\{3,\dots,n\}\cong \S_{n-2}$ and let $\delta=(12)$. Consider an element $$\alpha =(1 i_2\dots i_{k} 2 i_{k+2}\dots i_n)\in O_k.$$ Then, for any $\pi \in L$, we have \begin{align*}
    \alpha^\pi& =(1i_2^\pi\dots i_{k}^\pi 2 i_{k+2}^\pi\dots i_n^\pi)\in O_{k}, \\
    \alpha^\delta& =(2i_2\dots i_{k} 1 i_{k+2}\dots i_n)=(1 i_{k+2}\dots i_n2i_2\dots i_{k}  )\in O_{n-k},
\end{align*}
from which the following lemma follows immediately.

\begin{lemma}\label{lem:orbitL}
For $1\leq k\leq n-1$, the group $L$  is regular on $O_k$, and $O_k^{\delta}=O_{n-k}$. 
In particular, $O_1,\dots,O_{n-1}$ are the all orbits of $L$ on $\calA$.
\end{lemma}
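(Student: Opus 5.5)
To prove Lemma~\ref{lem:orbitL}, I would work directly with the normal form of $n$-cycles: every $\alpha\in\calA$ is uniquely written as $(1i_2\dots i_n)$, and $\alpha\in O_k$ exactly when $i_{k+1}=2$, equivalently $1^{\alpha^k}=2$. The characterization $O_k=\{\alpha\in\calA\mid 1^{\alpha^k}=2\}$ is the most convenient one, since it is visibly conjugation-compatible.

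\textbf{Step 1: $L$ preserves each $O_k$.} For $\pi\in L$ we have $1^\pi=1$ and $2^\pi=2$. Hence
\[1^{(\alpha^\pi)^k}=1^{\pi^{-1}\alpha^k\pi}=(1^{\alpha^k})^\pi=2^\pi=2,\]
so $\alpha^\pi\in O_k$. This is just the displayed computation before the lemma.

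\textbf{Step 2: $L$ is regular on $O_k$.} An element $\alpha\in O_k$ is determined by the sequence $(i_2,\dots,i_k,i_{k+2},\dots,i_n)$, which is an arbitrary ordering of $\{3,\dots,n\}$. Thus $|O_k|=(n-2)!=|L|$. Conjugation by $\pi$ replaces this sequence by its image under $\pi$.

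Two cycles with sequences $s$ and $s'$ are conjugate via the unique $\pi\in L$ that maps $s$ entrywise to $s'$. Hence $L$ acts transitively on $O_k$. The stabilizer is trivial, because $\pi$ fixing the sequence means $\pi$ fixes every point of $\{3,\dots,n\}$, so $\pi=1$. Regularity follows.

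\textbf{Step 3: $O_k^\delta=O_{n-k}$.} Let $\delta=(12)$ and $\alpha\in O_k$. Then $1^{\alpha^k}=2$ gives $2^{\alpha^{n-k}}=1$, since $\alpha$ has order $n$. Therefore
\[1^{(\alpha^\delta)^{n-k}}=\big(1^{\delta}\big)^{\alpha^{n-k}\delta}=2^{\alpha^{n-k}\delta}=1^\delta=2,\]
so $\alpha^\delta\in O_{n-k}$. This gives $O_k^\delta\subseteq O_{n-k}$. Equality follows because $\delta$ is a bijection and $|O_k|=|O_{n-k}|$; alternatively, apply the same inclusion with $n-k$ in place of $k$.

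\textbf{Step 4: the orbits.} The sets $O_1,\dots,O_{n-1}$ partition $\calA$, and each $O_k$ is a single $L$-orbit by Step 2. Hence these are exactly all the $L$-orbits on $\calA$.

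None of these steps is a real obstacle. The only point needing care is the conjugation convention $\alpha^\pi=\pi^{-1}\alpha\pi$ in Steps 1 and 3, which I would fix explicitly at the start.
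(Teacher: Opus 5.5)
Your proof is correct and is essentially the paper's argument: conjugation by $\pi\in L$ simply relabels the entries of the normal form $(1i_2\dots i_n)$ while fixing $1$ and $2$, which gives both invariance and regularity on each $O_k$. The differences are cosmetic: you verify $O_k^\delta=O_{n-k}$ via $1^{\alpha^k}=2$ and $\alpha^n=1$, whereas the paper rotates $(2i_2\dots i_k1i_{k+2}\dots i_n)$ to start at $1$, and you spell out the count $|O_k|=(n-2)!=|L|$ that the paper leaves implicit.
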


Let $H=\Sym\{2,\dots, n\}\cong \S_{n-1}$. Now we are ready to state our construction of $2$-arc-transitive covers of complete graphs.
\begin{construction}\label{cons:simplecomplete}
    Using the definitions above, we choose an element $f\in F$
    such that
    \[ f(\alpha)=\begin{cases}
        y, & \mbox{ if $\alpha\in O_1$;}\\
        y^{-1}, & \mbox{ if $\alpha\in O_{n-1}$;}\\
        x, & \mbox{ if $\alpha\in O_2\cup O_{n-2}$;}\\
        1,&\mbox{ otherwise.}
    \end{cases}\]
    Let $g=f\delta$ with $\delta=(12)\in G$, and let $Y=\langle H,g\rangle$.
    Define $\Gamma=\Cos(Y,H,HgH)$.\qed
\end{construction}

Before proving that the graph $\Gamma$ constructed above is a desired graph, we give a simple lemma about subgroups of $R=T_1\times\cdots \times T_k\cong T^k$.
Let $\rho_i$ be the projection map from $R$ to $T_i$ for each $i$, that is, 
\[\rho_i(t_1,...,t_k)=t_i\mbox{ for $(t_1,...,t_k)\in R=T_1\times\cdots\times T_k.$}\]
A subgroup $S\leqslant R$ is called a \textit{subdirect product} of $R$ if $\rho_i(S)=T_i$ for each $i$.
Furthermore, we call a subdirect product $S$ of $R\cong T^n$ a \textit{full-diagonal} subgroup of $R$ if $S\cong T$.

\begin{lemma}\label{lem:subdirect}
	Let $T$ be a non-abelian simple group.
    Let $X=(T_1\times\cdots \times T_n){:}\S_n\cong T\wr \S_n$, and let $N$ be the minimal normal subgroup of $X$.
	Suppose that $X_0\leqslant X$ acts transitively by conjugation on the set $\{T_1,\dots, T_n\}$ (or equivalently, on the index set $\{1,\dots,n \}$).
	If $X_0\cap N$ projects onto each $T_i$, then the following statements hold.
	\begin{enumerate}[\rm(1)]
		\item  There exists a $X_0$-invariant partition $\{I_1,\dots,I_m\}$ of the set $\{1,2,\dots,n\}$ such that $X_0\cap N=\prod_{j=1}^{m}D_j$, where $D_j\cong T$ is a full-diagonal subgroup of $\prod_{i\in I_j}T_i$;
		\item $X_0\cap N$ is a minimal normal subgroup of $X_0$ and $m\mid n$.
	\end{enumerate}
\end{lemma}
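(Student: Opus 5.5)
The plan is to apply the classical structure theorem for subdirect products of direct powers of a non-abelian simple group (the Scott/Goursat-type lemma), and then use transitivity of $X_0$ to make the resulting block structure uniform. Write $S=X_0\cap N$, where $N=T_1\times\cdots\times T_n$. By hypothesis, $S$ is a subdirect product of $N$.

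\textbf{Step 1: structure of $S$.} I would prove by induction on $n$ that a subdirect product of $T_1\times\cdots\times T_n$ has the form $\prod_{j} D_j$. Here the $D_j$ are full diagonal subgroups over the parts $I_j$ of some partition of $\{1,\dots,n\}$. For the inductive step, let $K=S\cap T_n$. Because $\rho_n(S)=T_n$ and $K\trianglelefteq S$, $K$ is normal in $T_n$, so $K=1$ or $K=T_n$.
\begin{itemize}
\item If $K=T_n$, then $S=S'\times T_n$, where $S'$ is the projection of $S$ to the first $n-1$ factors, and $\{n\}$ is a singleton part.
\item If $K=1$, then $S\cong S'$ via the projection, and the composite $S'\to S\to T_n$ is a surjection with some kernel $M\trianglelefteq S'$. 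By induction $S'=\prod D_j'$ with each $D_j'\cong T$ simple. The normal subgroups of such a product are sub-products of the factors, and $S'/M\cong T$, so $M$ omits exactly one factor $D_{j_0}'$. Hence $S$ is the graph of an isomorphism $D_{j_0}'\to T_n$ composed with projection. Adjoining $n$ to the part $I_{j_0}$ gives a full diagonal subgroup over $I_{j_0}\cup\{n\}$.
\end{itemize}
I would also note that the $D_j$ are exactly the minimal normal subgroups of $S$, so the partition is determined by $S$. Concretely, $I_j=\{i : \rho_i(D_j)\neq 1\}$.

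\textbf{Step 2: invariance of the partition.} The group $X_0$ normalizes $N$ and normalizes $X_0$, hence normalizes $S$. Conjugation by $x\in X_0$ therefore permutes the minimal normal subgroups $D_j$ of $S$. Moreover $x$ maps $T_i$ to $T_{i^x}$, where $i\mapsto i^x$ is the induced permutation. Consequently $\rho_{i^x}(D_j^x)\neq1$ if and only if $\rho_i(D_j)\neq1$, which gives $I_j^x=I_{j'}$ where $D_j^x=D_{j'}$. So $\{I_1,\dots,I_m\}$ is $X_0$-invariant, proving (1).

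\textbf{Step 3: minimality and $m\mid n$.} Since $X_0$ is transitive on the indices and preserves the partition, it is transitive on the blocks. All blocks therefore have equal size, giving $m\mid n$. For minimality, let $1\neq M\trianglelefteq X_0$ with $M\leq S$. Then $M\trianglelefteq S$, so $M$ is a product of some of the $D_j$. Since $M$ is $X_0$-invariant and $X_0$ is transitive on the $D_j$, $M$ contains all of them, i.e.\ $M=S$.

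\textbf{Main obstacle.} The only delicate point is the $K=1$ case of Step 1, namely identifying the normal subgroups of $\prod D_j'$. This uses that $T$ is non-abelian simple: a normal subgroup $M$ either contains $D_j'$ or centralizes it, because $[M,D_j']\leq M\cap D_j'$. Since $Z(T)=1$, it follows that $M$ is the product of the factors it contains.
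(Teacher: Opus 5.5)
Your proposal is correct and follows the same route as the paper. The paper cites Scott for the decomposition $X_0\cap N=\prod_j D_j$, whereas you prove it by induction on $n$. Both arguments then get $X_0$-invariance of the partition from $X_0$ permuting the minimal normal subgroups $D_j$, and get $m\mid n$ and minimality from transitivity. You also spell out the step that normal subgroups of $\prod_j D_j$ are subproducts, which the paper leaves implicit.
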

\begin{proof}
    (1).
    Since $X_0\cap N$ projects onto each $T_i$, it follows that $X_0\cap N$ is a subdirect product of $N$.
    Then there exists a partition $\{I_1,\dots,I_m\}$ of $\{1,...,n\}$ such that $X_0\cap N=\prod_{j=1}^{m}D_j$, where $D_j\cong T$ is a full-diagonal subgroup of $\prod_{i\in I_j}T_i$, see~\cite[page 328]{scott1980representations}.
    Since the $D_1,\dots,D_m$ are exactly the minimal normal subgroups of $X_0\cap N$, it follows that for any $x\in X_0$, $x$ induces a permutation on $\{I_1,\dots,I_m\}$, which is given by
    \[
    D_i^x = D_j
    \quad\Longleftrightarrow\quad
    I_i^x = I_j.
    \]
    Noting that the kernel of the action of $X_0$ on $\{I_1,\dots, I_m\}$ contains $X_0\cap N$, we deduce that
    $\{I_1,\dots,I_m\}$ is $X_0$-invariant.

    (2). Since $\overline{X_0}=X_0/(X_0\cap N)$ is a transitive subgroup of $\S_n$, it follows that $X_0$ is transitive on $\{I_1,\dots, I_m\}$, and hence $|I_1|=\cdots=|I_m|$.
    This implies that $m$ is a divisor $n$.
    Consequently, $X_0$ acts transitively by conjugation on $\{D_1,\dots, D_m\}$, and hence $X_0\cap N$ is minimal in $X_0$.
\end{proof}

We now prove Theorem~\ref{thm:cover-complete}.
For convenience, we restate the theorem as the following lemma.

\begin{lemma}\label{lem:simplecomplete}
    Let $\Gamma$ be as constructed in Construction~$\ref{cons:simplecomplete}$.
    Then $Y\cong T^m.\S_n$ for some positive integer $m$ which is a divisor of $(n-1)!$, and $\Gamma$ is a connected $2$-arc-transitive graph, and is a minimal normal cover of $\K_n$.
    Moreover, $m\geqslant\frac{1}{2}\binom{n}{\lfloor\frac{n}{2}\rfloor}$ if $n\geqslant 7$.
\end{lemma}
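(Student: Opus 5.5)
The plan is to work inside $X=F{:}G$ and set $N:=Y\cap F$. Since the image of $g=f\delta$ in $G=X/F$ is $\delta=(12)$, the image of $Y$ in $G$ is $\langle H,\delta\rangle=\S_n$. Hence $YF=X$ and $Y/N\cong \S_n$, so $Y=N.\S_n$. Also $Y$ permutes the coordinates of $F\cong T^{|\calA|}$ as $G$ permutes $\calA$ by conjugation, and this action is transitive since $\calA$ is a conjugacy class.

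Assume for the moment that $N$ projects onto every coordinate $T_\alpha$. Then Lemma~\ref{lem:subdirect}, applied with $X_0=Y$ and index set $\calA$, gives $N=D_1\times\cdots\times D_m\cong T^m$, minimal normal in $Y$, with $m\mid |\calA|=(n-1)!$. The graph-theoretic claims are then routine.
\begin{enumerate}[\rm(1)]
\item Connectivity holds because $Y=\langle H,g\rangle$ by definition.
\item We have $g^2=ff^{\delta}$. By Lemma~\ref{lem:orbitL}, $O_k^\delta=O_{n-k}$, so $ff^\delta$ takes the values $yy^{-1}$, $y^{-1}y$, $x\cdot x$ or $1$ on every $\alpha$. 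Thus $g^2=1$, and $\Gamma$ is undirected.
\item The function $f$ is constant on $L$-orbits by Lemma~\ref{lem:orbitL}, so $f$ is $L$-invariant. Since $L$ centralizes $\delta$, it follows that $g$ centralizes $L$, and so $L\leqslant H\cap H^g$. Conversely, $H\cap F=1$ and the image of $H\cap H^g$ in $G$ lies in $H\cap H^\delta=L$. Hence $H\cap H^g=L$, and $H\cong\S_{n-1}$ is $2$-transitive on $[H:L]$. So $\Gamma$ is $(Y,2)$-arc-transitive.
\item Since $N\cap H=1$ and $N\lhd Y$, the group $N$ is semiregular with $n\geqslant 3$ orbits. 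The normal quotient is $\Cos(\S_n,\S_{n-1},\S_{n-1}\delta\S_{n-1})=\K_n$, so $\Gamma$ is an $N$-cover of $\K_n$, and minimality comes from Lemma~\ref{lem:subdirect}(2).
\end{enumerate}

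The main step is proving that $N$ projects onto some (hence every) coordinate $T_\alpha$. I would produce elements of $N$ as powers $(gh)^{o}$ with $h\in H$ and $o=|\delta h|$. Writing $c=\delta h$, we have $gh=fc$, so $(gh)^o=f\,f^{c^{-1}}\cdots f^{c^{-(o-1)}}$, whose $\alpha$-coordinate is $f(\alpha)f(\alpha^{c})\cdots f(\alpha^{c^{o-1}})$. I would try $h=(23)$, giving $c=(132)$ of order $3$, and also $h=(2\,3\cdots)$-type cycles. Then I would choose $\alpha$ so that exactly one term of the orbit $\alpha,\alpha^c,\dots$ lies in $O_1\cup O_{n-1}$, or exactly one lies in $O_2\cup O_{n-2}$, with the others in the remaining $O_k$ where $f=1$. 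Using $n\geqslant 4$, one should then obtain elements of $N$ whose $\alpha$-coordinates are $y^{\pm1}$ and $x$ respectively, possibly up to simultaneous conjugation in $T$, which can be adjusted by elements of $L$ fixing $\alpha$. The projection then contains $\langle x,y\rangle=T$. The delicate part is the bookkeeping of which $O_k$ each $\alpha^{c^i}$ lies in (via $1^{\alpha^k}=2$), and handling $n=4,5,6$ separately if the generic choice collides.

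For the bound on $m$: $m=|\S_n:K|$, where $K$ is the stabilizer in $\S_n$ of the block $I_1$. I would use the classical list of subgroups of $\S_n$ of index $<\frac12\binom{n}{\lfloor n/2\rfloor}$ for $n\geqslant 7$. These are $\A_n$, $\S_n$, the (intransitive) $\S_k\times\S_{n-k}$ and their index-$2$ subgroups with $k$ small, and $\S_{n-1}$-type. I would then exclude each using the explicit elements of $N$ above. If $K$ were that large, the diagonal $D_1$ would force equal (up to a fixed automorphism) coordinates of every element of $N$ on many $\alpha$ in a large $K$-orbit. The computed elements $(gh)^o$ have support on only a few $L$-orbits, which contradicts this. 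I expect this exclusion, together with the coordinate computation, to be the main obstacle.
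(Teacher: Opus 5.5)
\textbf{There is a genuine gap: the step you defer does not go through as planned.} Your reduction is fine and follows the paper: $N=Y\cap F$, $Y/N\cong\S_n$, $g^2=1$, $[L,g]=1$, and Lemma~\ref{lem:subdirect} once $N$ projects onto one coordinate. Your derivation of $H\cap H^g=L$ via the projection $X\to G$ is a clean substitute for the paper's maximality argument.

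The gap is the step you defer, which is the heart of the lemma: showing that $N$ projects onto some $T_\alpha$. Your mechanism needs exactly one nontrivial factor in the $\alpha$-coordinate of $(gh)^o$, with all other factors lying where $f=1$.
\begin{enumerate}[\rm(1)]
\item For $n=4,5$ one has $O_1\cup O_2\cup O_{n-2}\cup O_{n-1}=\calA$. So $f$ takes no trivial value, and every coordinate of $(gh)^o$ (with $o\geq2$, since $\delta h\neq1$) is a product of $o$ letters from $\{x,y,y^{-1}\}$. Your mechanism is therefore unavailable for every $h$.
\item For $h=(23)$, the three factors at $\gamma$ come from $O_{d(1,2)},O_{d(2,3)},O_{d(3,1)}$, where $a^{\gamma^{d(a,b)}}=b$. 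These distances sum to $n$ or $2n$. A lone $y^{\pm1}$ then forces $n\geq7$ and a lone $x$ forces $n\geq8$, so with this $h$ you cover at best $n\geq8$. You give no analysis for other $h$.
\item Your fallback of correcting a conjugation by ``elements of $L$ fixing $\alpha$'' is empty, since $L$ is regular on each $O_k$ (and elements of $G$ only permute coordinates).
\end{enumerate}
The missing idea is that isolation is unnecessary. Take $s=(g(23))^3=ff^{(123)}f^{(132)}\in N$ and $\alpha=(12\cdots n)$. Then $s(\alpha)=y^2x$ and $s(\alpha^{-1})=y^{-2}x$. Their quotient $y^4$ generates $\langle y\rangle$ because $|y|$ is an odd prime, so these two values generate $\langle x,y\rangle=T$. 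Choosing $h\in H$ with $\alpha^h=\alpha^{-1}$, the elements $s,s^{h^{-1}}\in N$ have $\alpha$-coordinates $y^2x$ and $y^{-2}x$. This works uniformly for all $n\geq4$, including the case $n=4$ used in Section~\ref{sec:n4}.

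\textbf{The bound for $n\geq7$ is also not secured.} Your list of subgroups of index $<\frac12\binom{n}{\lfloor n/2\rfloor}$ omits $\mathrm{AGL}(3,2)<\S_8$, which is transitive of index $30<35$. Also, comparing the size of the block $I_1$ with the support of $s$ (at most $12(n-2)!$ coordinates) only gives $m\geq (n-1)/12$.

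The observation you need is that the block stabilizer $K$ contains $\alpha$ itself: $\alpha$ fixes $\alpha$ under conjugation, hence fixes the block $I_1\ni\alpha$. So $K$ contains an $n$-cycle and is transitive. This removes every intransitive candidate at once, and also $\mathrm{AGL}(3,2)$, which has no $8$-cycle. By the same index classification the paper uses (Dixon--Mortimer, Theorem 5.2B), what remains below the threshold is $\A_n$ and $\S_n$. Indeed, $\S_{n/2}\wr\S_2$ has index exactly $\frac12\binom{n}{n/2}$, and $\PSL(3,2)$ at $n=7$ has index $30>35/2$.
\begin{enumerate}[\rm(1)]
\item $K=\S_n$, i.e.\ $m=1$, is excluded because $s(\beta)=1\neq s(\alpha)$ for $\beta=(142536\cdots n)$. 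This is impossible inside a single full-diagonal block.
\item For odd $n$, $\A_n$ also contains $\alpha$, so transitivity does not exclude it. It is excluded because $\mathrm{Sym}\{4,\dots,n\}$ centralizes $s$, so $s(\beta^{(45)})=1$ as well. Since $C_{\S_n}(\beta)\leq\A_n$, the elements $\beta$ and $\beta^{(45)}$ lie in different $\A_n$-classes, so one of them is $\A_n$-conjugate to $\alpha$.
\end{enumerate}
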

\begin{proof}
    We first claim that $g^2 = 1$ and $[L, g] = 1$.
    Let $c = g^2 = (f\delta)^2 = f f^\delta$. For any point $\alpha \in \mathcal{A}$, we have $c(\alpha) = f(\alpha) f(\alpha^\delta)$.
    Note that $O_k^\delta = O_{n-k}$. By the definition of $f$, we have $f(\alpha^\delta) = f(\alpha)^{-1}$. Hence,
    $c(\alpha) = f(\alpha) f(\alpha)^{-1} = 1$, and thus 
    \[g^2=c = 1.\]
    Let $z \in L$. Then $f^z(\alpha) = f(\alpha^{z^{-1}})$.
    Recalling that $\alpha$ and $\alpha^{z^{-1}}$ lie in the same $L$-orbit, we have $f^z = f$.
    Since $\delta^z = \delta$, it follows that $g$ centralizes $z$. So $[L, g] = 1$, as claimed.

    Recall $H=\Sym\{2,\dots, n\}$ and $F\cong T^{|\calA|}\lhd X$.
    Since $[L,g]=1$, we have $H\cap H^g\geqslant L$.
    Note that $L$ is maximal in $H$, and $g\notin \N_X(H)$.
    This implies that $H\cap H^g=L$. Therefore, a vertex stabilizer and an arc stabilizer of $\Gamma$ is isomorphic to $\S_{n-1}$ and $\S_{n-2}$, respectively.
    It follows that $\Gamma$ is $2$-arc-transitive and has valency $n-1$.
    Set $M=Y\cap F$.
    Then $Y/M\cong FY/F\cong \langle H,\delta\rangle\cong \S_n$.
    Therefore, the normal quotient graph $\Gamma_M$ has $n$ vertices with valency $n-1$. Consequently, $\Gamma_M\cong\K_n$, and thus $\Gamma$ is a $2$-arc-transitive cover of $\K_n$.

    Finally, we show that $M=Y\cap F\cong T^m$ for some positive integer $m$.
    Let
    \[s=(g(23))^3=(f(132))^3=ff^{(123)}f^{(132)}\in M=Y\cap F.\]
    Note that, for any $\alpha\in \mathcal{A}$, we have that
    \[s(\alpha)=f(\alpha) f(\alpha^{(132)})f(\alpha^{(123)}).\]
    Let $\alpha=(123...n)\in O_1$.
    It is a routine to verify that
    \[{\alpha}^{(132)}\in O_1,\ {\alpha}^{(123)}\in O_{n-2},\ {\alpha}^{-1}\in O_{n-1},\ {(\alpha^{-1})}^{(132)}\in O_{n-1}\mbox{ and }{(\alpha^{-1})}^{(123)}\in O_{2}.\]
    Hence, we have that
    \begin{align*}
        s(\alpha) &=f(\alpha) f({\alpha}^{(132)})f({\alpha}^{(123)})=y^2x\ \mbox{ and }\\
        s(\alpha^{-1}) &=f(\alpha^{-1}) f({(\alpha^{-1})}^{(132)})f({(\alpha^{-1})}^{(123)}) =y^{-2}x.
    \end{align*}
    Since $s(\alpha)s(\alpha^{-1})^{-1}=y^2x(y^{-2}x)^{-1}=y^4$ and $y$ has odd prime order, it follows that $y\in \langle s(\alpha),s(\alpha^{-1})\rangle$.
    Consequently,
    \[\langle s(\alpha),s(\alpha^{-1})\rangle=\l y^{-2}x, y^2x\r=\langle y,x\rangle=T.\]
    Recall that $H$ acts transitively on $\mathcal{A}$.
    Choose $h\in H$ such that $\alpha^h=\alpha^{-1}$.
    Then both $s$ and $s^{h^{-1}}$ lie in $M=Y\cap F$, with $s^{h^{-1}}(\alpha)=s(\alpha^h)=s(\alpha^{-1})$.
    Since $\langle s(\alpha),s(\alpha^{-1})\rangle=T$, the group $Y\cap F$ projects onto at least one direct product component of $F\cong T^{|\mathcal{A}|}$.
    Further, since $Y/M\cong \S_n$ is transitive on the $|\mathcal{A}|$ components of $F$, $M$ projects onto every component.
    According to Lemma~\ref{lem:subdirect}, $M=Y\cap F\cong T^{m}$ is minimal in $Y$, where $m$ is the size of a $\S_n$-invariant partition on $\calA$ and is a divisor of $|\calA|=(n-1)!$.
    Hence, $Y\cong T^m.\S_n$ and $\Gamma$ is a minimal cover of $\K_n$ with transformation group $M$.

    Assume $n\geqslant 7$.
    Let $\beta=(142536\dots n)\in O_2$.
    Then $\beta^{(123)}=(243516\dots n)\in O_{n-4}$ and $\beta^{(132)}=(341526\dots n)\in O_{2}$.
    This gives
    \[s(\beta)=f(\beta)f(\beta^{(132)})f(\beta^{(123)})=x^2=1,\]
    implying that $M=Y\cap F$ is not a full-diagonal subgroup of $F$, and so $m>1$.
    Let $I_1$ be the block that contains $\alpha$ and let $D$ be the block stabilizer of $I_1$ in $\S_n$. It follows that $m=|\S_n:D|$.
    Since the conjugation action of $\alpha$  stabilizes itself, we have $\alpha \in D$ and $D$ is transitive under the natural action on $n$ points.
    By~\cite[Theorem 5.2B]{snmaximal}, if $|\S_n:D|<\binom{n}{\lfloor \frac{n}{2}\rfloor}$ when $n\geqslant 7$, then either
    \begin{enumerate}[\rm(1)]
        \item $(n,D,|\S_n:D|)=(2n_0,\S_{n_0}\wr\S_2,\frac{1}{2}\binom{2n_0}{n_0})$; or
        \item $(n,D,|\S_n:D|)=(7,\PSL(3,2),30)$ or $(8,\mathrm{AGL}(3,2),30)$.
    \end{enumerate}
    Note that $30>\frac{1}{2}\binom{7}{3}=\frac{35}{2}$ and $\mathrm{AGL}(3,2)$ does not contain any $n$-cycles by~\cite[Theorem 1.2]{cycles}.
    Thus, we have that $m=|\S_n:D|\geqslant \frac{1}{2}\binom{n}{\lfloor\frac{n}{2}\rfloor}$ when $n\geqslant 7$.
\end{proof}

\section{For the case \texorpdfstring{$n=4$}{n=4}}\label{sec:n4}

In this section, we focus on Construction~\ref{cons:simplecomplete} in the minimal case $n=4$.
We rewrite the construction as follows.

Let $\mathcal{A}$ be the set of all $4$-cycles of $\{1,2,3,4\}$, and write $\mathcal{A}=\{\alpha_1,\dots,\alpha_6\}$, where
\[ 
\alpha_1=(1234),\ \alpha_2=(1432),\ \alpha_3=(1243),\ 
\alpha_4=(1342),\ \alpha_5=(1324),\ \alpha_6=(1423).
\]
Recall that $H=\langle h_1,h_2\rangle\leqslant\S_4$ and $\delta=(12)$, where $h_1=(234)$ and $h_2=(34)$.  
Identifying the six elements of $\mathcal{A}$ with $\{1,\dots,6\}$, the permutations $h_1$, $h_2$, and $\delta$ acting on $\mathcal{A}$ with 
\[
h_1=(146)(235),\qquad 
h_2=(13)(24)(56),\qquad 
\delta=(14)(23)(56).
\]
Similarly, the map $f$ defined in Construction~\ref{cons:simplecomplete} can be represented as 
\[f=(y, y^{-1}, y, y^{-1}, x, x).\]

\begin{construction}\label{con:K4}
    Let $T=\langle x,y\rangle$ be a non-abelian simple group with $|x|=2$ and $|y|$ an odd prime, and let $X=T\wr \S_6= T^6{:}\S_6$ such that 
    \[(t_1,t_2,...,t_6)^\sigma=(t_{1^{\sigma^{-1}}},t_{2^{\sigma^{-1}}},...,t_{6^{\sigma^{-1}}})\mbox{ for $(t_1,t_2,...,t_6)\in T^6$ and $\sigma\in \S_6$.}\]
    Set $H=\langle h_1,h_2\rangle <\S_6$ and $g=f\delta\in X$.
    Let $Y=\langle H,g\rangle=\langle h_1,h_2,f\delta\rangle$.
    Define $\Gamma=\Cos(Y,H,HgH)$.\qed
\end{construction}

In the rest of this section, we always use the definitions given in Construction~\ref{con:K4}. 
By Lemma~\ref{lem:simplecomplete}, $\Gamma$ is a connected $(Y,2)$-arc-transitive cubic graph with $\Gamma_N\cong \K_4$ and $Y\cong T^d.\S_4$. 
However, it seems difficult to determine the exact value of $d$ for general cases of Construction~\ref{cons:simplecomplete}. The following theorem gives a complete description for $\K_4$ in different situations. 
For convenience, we define the following two sets:
\[\begin{aligned}
    \Phi_1&=\{\varphi\in \Aut(T) : \varphi(x)=x,\ \varphi(y)=y^{-1}\}\mbox{ and }\\
    \Phi_2&=\{\varphi\in \Aut(T) : \varphi(yxy)=y^2x,\ \varphi(y^2x)=yxy,\ \varphi(xy^2)=y^{-2}x\}.
\end{aligned}\]
Note that each of $\Phi_1$ and $\Phi_2$ contains at most one automorphism of $T$.
\begin{theorem}\label{thm:k4}
    The group $Y\cong T^d.\S_4$ has a minimal normal subgroup $M\cong T^d$, and $\Gamma$ is a connected $(Y,2)$-arc-transitive $T^d$-cover of $\K_4$.
    Moreover, $d\in\{1,3,6\}$ and exactly one of the following statements holds.
        \begin{enumerate}[\rm(1)]
            \item $d=1$ if and only if both $\Phi_1$ and $\Phi_2$ are non-empty;
            \item $d=3$ if and only if $\Phi_1$ is non-empty and $\Phi_2$ is empty;
            \item $d=6$ if and only if $\Phi_1$ is empty.
        \end{enumerate}
\end{theorem}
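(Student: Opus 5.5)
The first assertions follow by specialising Lemma~\ref{lem:simplecomplete} to $n=4$. It gives that $\Gamma$ is a connected $(Y,2)$-arc-transitive cover of $\K_4$, and that $M=Y\cap F\cong T^d$ is minimal normal in $Y$. Here $d$ is the number of blocks of an $\S_4$-invariant partition of $\calA$, as in Lemma~\ref{lem:subdirect}. The action of $\S_4$ on the six $4$-cycles is the coset action on the centraliser $\langle\alpha_1\rangle\cong\bbZ_4$. The overgroups of $\bbZ_4$ in $\S_4$ are $\bbZ_4$, $\mathrm{D}_8$ and $\S_4$, so $d\in\{6,3,1\}$. The unique nontrivial block system with blocks of size $2$ is $\{\{\alpha_1,\alpha_2\},\{\alpha_3,\alpha_4\},\{\alpha_5,\alpha_6\}\}$, the pairs $\{\alpha,\alpha^{-1}\}$ (the $\mathrm{D}_8$-orbits). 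Thus $d\le 3$ if and only if the projection of $M$ to $T_1\times T_2$ is a full diagonal $\{(t,\varphi(t))\}$ for some $\varphi\in\Aut(T)$. Similarly $d=1$ if and only if in addition the projection to $T_1\times T_5$ is a full diagonal.

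Next I will find explicit generators of $M$. Write $\S_4=\langle a,b,c\rangle$ with $a=(12)$, $b=(23)$, $c=(34)$ and Coxeter relations $a^2=b^2=c^2=(ab)^3=(bc)^3=(ac)^2=1$. Since $b,c\in H$, the map $a\mapsto g$ induces $Y\to Y/M\cong\S_4$. Hence $M$ is the normal closure in $Y$ of the relators evaluated at $(g,b,c)$. By the proof of Lemma~\ref{lem:simplecomplete}, $g^2=1$ and $[g,c]=1$ because $c\in L$. So $M$ is the normal closure of the single element $s=(g(23))^3$. Its coordinates are computed as in that proof, e.g.\ $s(\alpha_1)=y^2x$ and $s(\alpha_2)=y^{-2}x$. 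I will write out all six coordinates of $s$ and of its conjugates under $h_1$, $h_2$ and $g$. Conjugation by $g$ permutes coordinates by $\delta$ and then multiplies coordinatewise by $f$, twisting by $y^{\pm1}$ or $x$. From this list I will read off the tuples whose first and second coordinates (respectively first and fifth coordinates) must be related by $\varphi$. These tuples are built from $x$, $y^{\pm1}$ and words such as $yxy$, $y^2x$, $xy^2$.

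Sufficiency is then a check. If $\varphi\in\Phi_1$, the subgroup $D=\{(t,\varphi(t),t',\varphi(t'),t'',\varphi(t''))\}$ of $T^6$ contains $s$, since $\varphi(y^2x)=y^{-2}x$. I will also check that $D$ is normalised by $H$ and by $g$. For $g$ the point is that $\varphi$ inverts $y$ and fixes $x$, so the twist by $f=(y,y^{-1},y,y^{-1},x,x)$ is compatible. Then $M\le D$ and $d\le 3$. Likewise, if $\Phi_1$ and $\Phi_2$ are both nonempty, I will exhibit a full diagonal $T\hookrightarrow T^6$ that is invariant under $Y$ and contains $s$. Its coordinate maps are composites of the elements of $\Phi_1$ and $\Phi_2$, and $\Phi_2$ is exactly the condition for coordinates $1$ and $5$ of $s$ and its conjugates to match. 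This gives $d=1$.

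For necessity, suppose $d\le3$. Then coordinates $1$ and $2$ of $M$ are related by some $\varphi\in\Aut(T)$, and applying this to the generators $s$ and $s^{w}$ forces $\varphi(y^2x)=y^{-2}x$ together with the other relations coming from the conjugates. I then aim to combine these into $\varphi(x)=x$ and $\varphi(y)=y^{-1}$, using that $y$ has odd prime order, as in the computation $y^4\in\langle y^2x,y^{-2}x\rangle$. The analogous argument with coordinates $1$ and $5$ yields an element of $\Phi_2$ when $d=1$. The main obstacle is this necessity step: choosing enough conjugates of $s$ so that the forced relations pin down $\varphi$ on $x$ and $y$ individually, rather than only on products. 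I will first try pairing $s$ with $s^{g}$ and $s^{h_2}$, whose first two coordinates should differ from those of $s$ by the twists $y^{\pm1}$. Since $d\in\{1,3,6\}$, exactly one of (1)--(3) holds, which gives the trichotomy.
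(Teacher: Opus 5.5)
You have the right skeleton, and parts of it differ from the paper in a useful way. Getting $M$ as the normal closure of $s=(g(23))^3$ from the Coxeter presentation of $\S_4$ is correct and neater than the paper's Cayley-graph/Schreier route. The $Y$-invariance of $D_{\varphi_1}$ is a valid proof that $\Phi_1\neq\emptyset$ implies $d\le 3$.

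However, you have the difficulty backwards. Necessity is easy once you compute
$s=(y^2x,y^{-2}x,yxy,y^{-1}xy^{-1},xy^{-2},xy^2)$, $s^{h_2}=(yxy,y^{-1}xy^{-1},y^2x,y^{-2}x,xy^2,xy^{-2})$ and $s^{h_1}=(xy^2,xy^{-2},y^{-2}x,y^2x,yxy,y^{-1}xy^{-1})$. These are exactly the paper's $t_2,t_1,t_3$.
\begin{itemize}
\item Your first choice $s^g$ gives nothing new, because $s^g=s^{-1}$.
\item $s$ and $s^{h_1}$ together give $\varphi(y^4)=y^{-4}$ and $\varphi(x)=x$ at once.
\item $\Phi_2$ is the coordinate-$1\to 3$ condition on $s^{h_2},s,s^{h_1}$, not $1\to5$; the $1\to 5$ map is $\varphi_1\varphi_2\varphi_1$. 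So the $d=1$ necessity must be done in $T_1\times T_3$.
\end{itemize}

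The genuine gap is the sufficiency direction of (1), that $\Phi_1,\Phi_2\neq\emptyset$ implies $d=1$. You know $M$ only as a normal closure. So $\varphi_2\in\Phi_2$ gives the $1\to3$ relation on just three elements of $M$, which says nothing about all of $M$ unless those three generate $M$ as a group.

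Your substitute is to show that
$\Delta=\{(t,\varphi_1t,\varphi_2t,\varphi_1\varphi_2t,\varphi_1\varphi_2\varphi_1t,\varphi_2\varphi_1t)\}$ is $Y$-invariant. This is not the formal check it was for $D_{\varphi_1}$.
\begin{itemize}
\item Invariance under $h_1,h_2$ already needs $\varphi_2^2=(\varphi_1\varphi_2)^3=1$.
\item Invariance under $g=f\delta$ is harder because of the twist by $f$. Matching coordinates $3$ and $2$ forces $\varphi_1\circ\varphi_2$ to be conjugation by $y^{-1}\varphi_2(y)$.
\item Matching coordinates $5$ and $6$ forces $\varphi_2$ itself to be inner, namely conjugation by $(\varphi_1\circ\varphi_2)(y)\,x$.
\end{itemize}
These identities are true, since they follow from the theorem. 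But they must be derived from the three defining equations of $\Phi_2$, and your plan does not anticipate them.

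The missing idea is the paper's Lemma~\ref{lem:k4}: $M=\langle s,s^{h_1},s^{h_2}\rangle$ as a group. The paper obtains this from Schreier generators of the regular subgroup $C=\langle s_1,s_2,s_3\rangle$ (the preimage of $\mathbb{Z}_2^2$), using the transversal $\{1,s_1,s_2,s_3\}$. With that in hand, a relation that holds on three generators holds on all of $M$, which is Step 1 of the paper. All three equivalences then reduce to direct checks on $t_1,t_2,t_3$.
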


Before proving Theorem~\ref{thm:k4}, we give the following lemma which will be used in the proof.
\begin{lemma}\label{lem:k4}
    Let $N=Y\cap T^6$.
    \begin{enumerate}[\rm(1)]
        \item Let $C\cong N.\bbZ_2^2$ be the full-preimage of $\bbZ_2^2\lhd \S_4\cong Y/N$.
        Then $\Gamma\cong \mathrm{Cay}(C,S)$ with $S=\{ s_1,s_2,s_3\}$such that 
        \[\begin{array}{l}
            s_1=gh_2=(y,y^{-1},y,y^{-1},x,x)(12)(34),\\
            s_2=h_2 h_1 g h_1=(x,x,y^{-1},y,y,y^{-1})(34)(56)\\
            s_3=h_2 h_1^{-1} g h_1^{-1}=(y^{-1},y,x,x,y^{-1},y)(12)(56).
        \end{array}
        \]
        \item $N=\langle t_1,t_2,t_3\rangle$, where
        \[\begin{array}{lllllll}
            t_1=(yxy&,y^{-1}xy^{-1}&,y^2x&,y^{-2}x&,xy^2&,xy^{-2}&),\\
            t_2=(y^2x&,y^{-2}x&,yxy&,y^{-1}xy^{-1}&,xy^{-2}&,xy^2&),\\
            t_3=(xy^{2}&,xy^{-2}&,y^{-2}x&,y^2x&,yxy&,y^{-1}xy^{-1}&).
        \end{array}
        \]
    \end{enumerate}
\end{lemma}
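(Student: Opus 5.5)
We argue via the regular subgroup $C$ in (1), then extract generators of $N$ from the Cayley description in (2).

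For (1), the plan is to show that $C$ acts regularly on the vertex set $[Y:H]$ of $\Gamma$. Then the standard criterion identifies $\Gamma$ with $\mathrm{Cay}(C,S)$, where $S$ consists of the unique elements of $C$ lying in the cosets $Hgh$ ($h\in H$) that form $\Gamma(H)$.

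\textbf{Regularity of $C$.} By Lemma~\ref{lem:simplecomplete}, $Y/N\cong\S_4$ acts on the four blocks of $\Gamma_N\cong\K_4$ naturally. The image of $C$ is the Klein group $\bbZ_2^2\lhd\S_4$, which is regular on these four points. The image of $H$ is a point stabilizer $\S_3$, which meets $\bbZ_2^2$ trivially. Also $H\cap N\leq \S_6\cap T^6=1$. Hence $C\cap H=1$. Since $|C|=4|N|=|Y:H|$, the group $C$ is regular on $[Y:H]$.

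\textbf{The three neighbours.} From the proof of Lemma~\ref{lem:simplecomplete}, $H\cap H^g=L=\langle h_2\rangle$, which has index $3$ in $H$ with transversal $\{1,h_1,h_1^{-1}\}$. So the three neighbours of the vertex $H$ are $Hg$, $Hgh_1$ and $Hgh_1^{-1}$.

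\textbf{Identifying $S$.} We then check the following for each listed $s_i$.
\begin{enumerate}[\rm(a)]
\item Each $s_i$ lies in the appropriate coset: trivially for $s_1=gh_2=h_2g$, since $[L,g]=1$, and directly from the definition for $s_2$ and $s_3$.
\item The $\S_4$-part of each $s_i$ lies in $\bbZ_2^2$, for example $\delta h_2=(12)(34)$.
\item The stated coordinates in $T^6$ are correct. This is a direct computation using $f=(y,y^{-1},y,y^{-1},x,x)$, the displayed actions of $h_1,h_2,\delta$ on $\{1,\dots,6\}$, and the rule $(t_i)^\sigma=(t_{i^{\sigma^{-1}}})$. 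For example, moving $h_1$ past $f$ conjugates $f$ by $h_1^{-1}$.
\end{enumerate}

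For (2), connectivity of $\Gamma$ gives $C=\langle s_1,s_2,s_3\rangle$, and $N$ is the kernel of $C\to\bbZ_2^2$. Each $s_i$ maps to a distinct involution of $\bbZ_2^2$.

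\textbf{Schreier generators.} We take the transversal $\{1,s_1,s_2,s_3\}$ and apply Schreier's lemma, so $N$ is generated by elements of the form $s_is_j\cdot(\text{rep})^{-1}$. These reduce to the following elements together with their conjugates by the $s_k$:
\begin{itemize}
\item the squares $s_i^2$;
\item the products $s_is_js_k^{-1}$ with $\{i,j,k\}=\{1,2,3\}$.
\end{itemize}
We compute $s_1^2=g^2h_2^2=1$, and similarly check whether $s_2^2$ and $s_3^2$ are trivial. Their permutation parts square to $1$, and their $T^6$-parts are of the form $(u,u^{-1},\dots)$ paired along the transpositions, so both should be $1$.

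\textbf{Matching the $t_i$.} So $N$ is generated by elements such as $s_1s_2s_3$ and its conjugates. Since $s_1s_2s_3$ has trivial permutation part, it lies in $N$. We compute it and its conjugates coordinatewise and match them with $t_1,t_2,t_3$ up to conjugation and inversion. Each coordinate of such a product is a word of length three in $x,y^{\pm1}$, such as $yxy$ or $y^2x$, which is consistent with the shape of the $t_i$. Finally we verify that the $t_i$ are permuted by conjugation by the $s_k$, so $\langle t_1,t_2,t_3\rangle$ is normal in $C$ and contains all Schreier generators.

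\textbf{Main obstacle.} The hardest step is this bookkeeping: getting the conjugation conventions right, and showing that the conjugates of $s_1s_2s_3$ by the $s_k$ fall back into $\{t_1^{\pm1},t_2^{\pm1},t_3^{\pm1}\}$ (possibly after multiplication), so that no further generators are needed. If the latter fails, we will simply add those extra products, check that they lie in $\langle t_1,t_2,t_3\rangle$, and conclude $N=\langle t_1,t_2,t_3\rangle$.
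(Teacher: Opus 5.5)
Your overall route is the paper's: $C$ is regular on $[Y:H]$, so $\Gamma\cong\mathrm{Cay}(C,S)$ with $S=C\cap HgH$, and $N$ comes from Schreier's lemma with transversal $\{1,s_1,s_2,s_3\}$. Your part (1) is correct, and more detailed than the paper's.

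The gap is in part (2), where you misstate what Schreier's lemma gives. The elements $rs\,\psi(rs)^{-1}$, for $r\in\{1,s_1,s_2,s_3\}$ and $s\in S$ (with $\psi(c)$ the representative of $Nc$), already generate $N$; no conjugates by the $s_k$ are involved. By adding them you take on the obligation to show $\langle t_1,t_2,t_3\rangle\lhd C$, and the check you propose for this is false. Indeed $t_1^{s_1}=s_2s_3s_1=t_2^{-1}$. Also $t_1^{s_2}=s_2s_1s_2s_3s_2=t_3t_1^{-1}t_2$, whose first coordinate is $xyxyx$, so in general it is none of the $t_i^{\pm1}$. Your fallback (``check that they lie in $\langle t_1,t_2,t_3\rangle$'') comes with no method. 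For an arbitrary simple $T$, membership in a $3$-generated subgroup of $T^6$ cannot be decided by inspecting coordinates; you need explicit word identities in the $s_i$, like the one just given. Likewise, ``match them with $t_1,t_2,t_3$ up to conjugation and inversion'' never says which products the $t_i$ actually are.

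The missing observation is short. The Schreier generators are:
\begin{enumerate}[\rm(i)]
\item $s\,s^{-1}=1$ for $r=1$;
\item $s_i^2=1$ for $r=s=s_i$;
\item $s_is_js_k$ for $i\neq j$, with $k$ the third index, because $s_is_j\in Ns_k$ (the $s_i$ map to the three distinct involutions of $\bbZ_2^2$).
\end{enumerate}
Since the $s_i$ are involutions, the six products in (iii) are exactly $t_1^{\pm1},t_2^{\pm1},t_3^{\pm1}$, where $t_1=s_1s_2s_3$, $t_2=s_1s_3s_2$ and $t_3=s_2s_1s_3$ (e.g.\ $s_3s_2s_1=t_1^{-1}$). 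So $N=\langle t_1,t_2,t_3\rangle$ with no normality check needed. What remains is the coordinate computation. Writing $s_i=u_i\sigma_i$, one gets $t_1=u_1u_2^{\sigma_1^{-1}}u_3^{(\sigma_1\sigma_2)^{-1}}$, and similarly for $t_2,t_3$; this reproduces the displayed tuples. This is exactly how the paper argues.
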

\begin{proof}
    (1).
    Note that $C/N\cong \bbZ_2^2$ is regular on $\K_4$.
    Then $C$ is regular on $\Gamma$.
    This yields that $\Gamma$ is isomorphic to a Cayley graph on $C$.
    It is not hard to see that $HgH=Hs_1H=HSH$ and $s_1,s_2,s_3\in C$.
    We deduce that $\Gamma=\Cos(Y,H,HgH)\cong \Cos(C,1,\{s_1,s_2,s_3\})\cong \mathrm{Cay}(C,S)$.   

    (2).
    Let $R=S\cup\{\mathrm{Id}(Y)\}$.
    Then $R$ is the right transversal for $C$ mod $N$.
    Let $\psi: Y\rightarrow R$ be a map such that $Ny=N\psi(y)$ with $\psi(y)\in R$ for any $y\in Y$.
    Note that $s_1,s_2,s_3$ are involutions.
    By \cite[Lemma 4.2.1]{seress2003Permutation}, we have that
    \[\begin{array}{ll}
        N&=\langle rs\psi(rs)^{-1}:r\in R,s\in S\rangle=\langle s_is_j\psi(s_is_j)^{-1}: i,j\in\{1,2,3\}\rangle\\
        &=\langle s_is_js_k:\{i,j,k\}=\{1,2,3\}\rangle=\langle s_1s_2s_3,s_1s_3s_2,s_2s_1s_3\rangle.
    \end{array}\]
    Let $t_1=s_1s_2s_3$, $t_2=s_1s_3s_2$ and $t_3=s_2s_1s_3$.
    Then 
    \[\begin{aligned}
        t_1&=s_1s_2s_3=(gh_2)(h_2h_1gh_1)(h_2 h_1^{-1} g h_1^{-1})=ff^{(\delta h_1)^{-1}}f^{(\delta h_1\delta h_1h_2h_1^{-1})^{-1}}\\
        &=ff^{(1526)}f^{(3546)}=(yxy,y^{-1}xy^{-1},y^2x,y^{-2}x,xy^2,xy^{-2}),\\
        t_2&=s_1s_3s_2=(gh_2)(h_2 h_1^{-1} g h_1^{-1})(h_2h_1gh_1)=ff^{(\delta h_1^{-1})^{-1}}f^{(\delta h_1^{-1}\delta h_1^{-1}h_2h_1)^{-1}}\\
        &=ff^{(3645)}f^{(1625)}=(y^2x,y^{-2}x,yxy,y^{-1}xy^{-1},xy^{-2},xy^2),\\
        t_3&=s_2s_1s_3=(h_2h_1gh_1)(gh_2)(h_2 h_1^{-1} g h_1^{-1})=f^{(h_2h_1)^{-1}}f^{(h_2h_1\delta h_1)^{-1}}f^{(h_2h_1\delta h_1\delta h_1^{-1})^{-1}}\\
        &=f^{(15)(26)(34)}f^{(34)(56)}f^{(3546)}=(xy^{2},xy^{-2},y^{-2}x,y^2x,yxy,y^{-1}xy^{-1}).
    \end{aligned}\]
    The proof is complete.
\end{proof}

Now we are ready to prove Theorem~\ref{thm:k4}.
\begin{proof}[Proof of Theorem~\ref{thm:k4}.]
    By Lemma~\ref{lem:k4}, we have that $\Gamma$ is a connected $(Y,2)$-arc-transitive $N$-cover of $\K_4$, where $N=Y\cap T^6=\langle t_1,t_2,t _3\rangle$ with 
    \[\begin{array}{lllllll}
        t_1=(yxy&,y^{-1}xy^{-1}&,y^2x&,y^{-2}x&,xy^2&,xy^{-2}&),\\
        t_2=(y^2x&,y^{-2}x&,yxy&,y^{-1}xy^{-1}&,xy^{-2}&,xy^2&),\\
        t_3=(xy^{2}&,xy^{-2}&,y^{-2}x&,y^2x&,yxy&,y^{-1}xy^{-1}&).
    \end{array}
    \]
    where $T=\langle x,y\rangle$ with $|x|=2$ and $|y|$ an odd prime.
    It is not hard to see that $N$ projects onto each direct product component.
    Hence, we have that $N\cong T^d$ for some $1\leqslant d\leqslant 6$.
    For convenience, let $\rho_i$ be the projection map from $T^6$ to the $i$-th direct product components for each $i=1,...,6$.

    \textbf{Step 1.} Let $i,j\in\{1,...,6\}$ with $i\neq j$. 
    We prove that the following statements are equivalent: 
    \begin{enumerate}[\rm(i)]
        \item  there exists $\varphi\in \Aut(T)$ such that $\varphi(\rho_i(z))=\rho_j(z)$ for every $z\in \{t_1,t_2,t_3\}$;
        \item there exists $\varphi\in \Aut(T)$ such that $\varphi(\rho_i(z))=\rho_j(z)$ for every $z\in N$;
        \item for any $h\in\langle h_1,h_2,\delta\rangle\cong \S_4$, there exists $\psi\in\Aut(T)$ such that $\psi(\rho_{i^h}(z))=\rho_{j^h}(z)$ for every $z\in N$.
    \end{enumerate}

    Clearly, part~(iii) implies part~(ii) and part~(ii) implies part~(i).
    Conversely, since $\l t_1,t_2,t_3\r=N$, part~(i) implies part~(ii). 
    Now, we assume that part~(ii) holds. 
    Lemma~\ref{lem:simplecomplete} implies that $Y\cong T^d.\S_4$, and then $T^6Y=T^6{:}\l h_1, h_2, \delta\r$. 
    Hence, for any $h\in \langle h_1,h_2,\delta\rangle$, there exist $\lambda\in Y$ and $\mu=(u_1,\dots, u_6)\in T^6$ such that $h=\mu^{-1}\lambda$. 
    Now, let $\tau_{i}$ and $\tau_j$ be the inner automorphisms of $T$ induced by $u_i$ and $u_j$, respectively, and set $\psi=\tau_j\circ \varphi\circ \tau_i^{-1}$. 
    Since  $N\lhd Y$, we have $N=N^{\lambda}=N^{\mu h}$. Hence, for any $z\in N$, there exists $w\in N$ such that $z=w^{\mu h}$ and for any $m\in \{1,\dots, 6\}$, 
    \[\rho_{m^h}(z) =\rho_{m^h}(w^{\mu h}) =\rho_m(w^{\mu } )=\left(\rho_m(w)\right)^{u_m}.\]
    It follows that
    \[\begin{aligned}
        \psi(\rho_{i^h}(z))&=\psi\bigl(\rho_{i}(w)^{u_i}\bigr)=\psi\circ\tau_i(\rho_i(\omega))=(\tau_j\circ \varphi) (\rho_{i}(w))&\\
        &=\tau_j (\rho_{j}(w))=\rho_j(w)^{u_j}=\rho_{j^h}(z).
    \end{aligned}\] 
    Hence, $\psi$ is the desired automorphism of $T$ and part~(iii) holds.

    \textbf{Step 2.} We show that $d<6$  if and only if there exists $\varphi\in \Phi_1$.

    By Lemma~\ref{lem:subdirect}, there is a block system $I=\{I_1,...,I_m\}$ of $\langle h_1,h_2,\delta\rangle<\S_6$ such that $N=\prod_{j=1}^m D_j$, where $D_j\cong T$ is a full-diagonal subgroup of $\prod_{i\in I_j}T_i$.
    Note that $\langle h_1,h_2,\delta\rangle\cong\S_4$ has exactly one non-trivial proper block system:
    \[\{\{1,2\},\{3,4\},\{5,6\}\}.\] 
    If $d<6$, then without loss of generality we may assume $\{1,2\}\subseteq I_1$. 
    Then there exists $\varphi\in \Aut(T)$ such that $\varphi(\rho_1(z))=\rho_2(z)$ holds for all $z\in N$. 
    This gives 
    \[\varphi(y^4)=\varphi(\rho_1(t_2t_3))=\rho_2(t_2t_3)=y^{-4}.\]
    Since $y$ is of odd order, we have $\varphi(y)=y^{-1}$ and 
    \[\varphi(x)=\varphi(\rho_1(t_3) y^{-2})=\varphi(\rho_1(t_3) )y^{2}=\rho_2(t_3) y^{2} =x. \]
    Hence $\varphi\in \Phi_1$.
    Conversely, if $\varphi\in \Phi_1$, then it is straightforward to see that \[\varphi(\rho_1(t_i))=\rho_2(t_i) \mbox{ for $i\in \{1,2,3\}.$}\] 
    Since $N=\l t_1,t_2,t_3\r$, we have that $N$ is a proper subdirect subgroup of $T^6$ which gives $d<6$. 
    
    \textbf{Step 3.} Finally, we show that $d=1$ if and only if both $\Phi_1$ and $\Phi_2$ are non-empty.
    
    The proof in Step 2 shows that $\Phi_1\neq \emptyset$ if and only if $I=\{\{1,2\},\{3,4\},\{5,6\}\}$ or $\{\{ 1,\dots, 6\}\}$. 
    Hence, it suffices to show that, under the assumption $\Phi_1$ is non-empty, $\Phi_2\neq \emptyset$ if and only if $I=\{\{ 1,\dots, 6\}\}$. 
    
    If $I=\{\{ 1,\dots, 6\}\}$, then there exists $\varphi_2\in \Aut(T)$ such that $\varphi_2(\rho_1(t_i))=\rho_3(t_i)$ holds for $i\in \{1,2,3\}$. 
    Obviously, $\varphi_2\in \Phi_2$ which gives $\Phi_2$ is non-empty.
    Conversely, if $\Phi_2$ is non-empty, then it is straightforward that $1$ and $3$ belong to some same part of the partition $I$, and thus $I=\{\{1,\dots, 6\}\}$.
    We complete the proof. 
\end{proof}

As an application of Theorem~\ref{thm:k4}, the following example shows that not all simple groups can occur for the possible values of $d$.
\begin{example}
    \begin{enumerate}[\rm(1)]
        \item It is proved in~\cite{bredadazevedo2021Strong} and~\cite{leemans2017Chiral} that a non-abelian simple group $T$ has the property that, for every generating pair $(x,y)$ with $|x|=2$, there always exists an automorphism centralizing $x$ and inverting $y$ if and only if
        \[T\in\{\A_7,\ \PSL_2(q),\ \PSL_3(q),\ \PSU_3(q)\}.\]
        By Theorem~\ref{thm:k4}, if $T$ is one of $\A_7$, $\PSL_2(q)$, $\PSL_3(q)$ and $\PSU_3(q)$, then $d\in\{1,3\}$. 
        Let $T=\A_5\cong\PSL_2(5)$.
        \begin{enumerate}[\rm(i)]
            \item Choose $x=(12)(34)$ and $y=(12345)$.
            Let $b=(14)(35)$. 
            Elementary calculations show that
            \[ (yxy,y^2x,xy^2)^b=(y^2x,yxy,y^{-2}x)=\bigl((14235),(12534),(13254)\bigr).\]
            Theorem~\ref{thm:k4} implies that $Y\cap T^6\cong T$, and hence $Y\cong \A_5.\S_4$.
            By Magma, we have that $Y\cong \A_5\times \S_4$.
            In this case, $\Gamma$ is an $\S_4$-cover of the Petersen graph.

            \item Choose $x=(12)(34)$ and $y=(153)$.
            Then $y^{4}=y=(153)$ and $[y^{-1},x]=(12453)$.
            Thus, there is no automorphism that maps $y^{4}$ to $[y^{-1},x]$.
            Note that
            \[y^4=(y^2x)(xy^2)\mbox{ and }[y^{-1},x]=(yxy)(y^{-2}x).\]
            This implies that there is no automorphism that maps $(y^2x,xy^2)$ to $(yxy,y^{-2}x)$.
            Theorem~\ref{thm:k4} yields that $Y\cap T^6\cong T^3$ and $Y\cong \A_5^3.\S_4$.
            By Magma, we have that $Y\cong (\A_5^3\times \bbZ_2^2){:}\S_3$.
            Note that $\Gamma$ is a normal $2$-arc-transitive Cayley graph on $\A_5^3\times \bbZ_2^2$.
        \end{enumerate}

        \item For prime $p\geqslant 11$, let $T=\A_p$, $x=(12)(36)$ and $y=(12\cdots p)$.
        Then there exists no element in $\S_p$ which centralizes $x$ and inverts $y$.
        Thus, by choosing such triple $(T,x,y)$, the group $Y$ in Construction~\ref{con:K4} equals to $T^6{:}\S_4$.
    \end{enumerate}
\end{example}

\vskip0.2in
\noindent\thanks{{\bf Acknowledgments.}
The authors are grateful to the anonymous referees for their valuable comments and suggestions that have helped to improve the paper.

This work was supported by the National Natural Science Foundation of China (No. 11931005, 12101518, 11901512), the Fundamental Research Funds for the Central Universities (No. 20720210036, 20720240136), and the Natural Science Foundation of Yunnan Province (No. 202401AT070275). 
}

\end{document}